\begin{document}
%
\title{Efficient Load Flow Techniques Based on Holomorphic Embedding for Distribution Networks}

\author{\IEEEauthorblockN{Majid Heidarifar, 
Panagiotis Andrianesis, Michael Caramanis}
\IEEEauthorblockA{Division of Systems Engineering\\
Boston University\\
Boston, Massachusetts 02446--8200\\
Email: mheidari, panosa, mcaraman@bu.edu}
}


%


\maketitle

\begin{abstract}
The Holomorphic Embedding Load flow Method (HELM) employs complex analysis to solve the load flow problem.
It guarantees finding the correct solution when it exists, and identifying when a solution does not exist.
The method, however, is usually computationally less efficient than the traditional Newton-Raphson algorithm, which is generally considered to be a slow method in distribution networks.
In this paper, we present two HELM modifications that exploit the radial and weakly meshed topology of distribution networks and significantly reduce computation time relative to the original HELM implementation.
We also present comparisons with several popular load flow algorithms applied to various test distribution networks.
\end{abstract}

%
\IEEEpeerreviewmaketitle

\section{Introduction}
Distribution networks are different from transmission systems in several aspects, including radial or weakly meshed structures, high $R/X$ ratios, non-transposed conductors, single or two-phase laterals, etc. Due to these inherent dissimilarities, conventional load flow methods, e.g., the Newton-Raphson (NR) method, are usually less efficient when applied to distribution networks relative to transmission system applications.

Taking advantage of the radial structure of distribution networks, the backward-forward sweep (BFS) algorithm of \cite{Shirmohammadi_EtAl-powersys1988} and the direct approach of \cite{Teng-PowerDelivery2003} are well-known for their computational efficiency.
In BFS, the forward sweep consists of a voltage update step starting from the slack node towards the far-end nodes, whereas the backward sweep is a current summation algorithm in the reverse direction. The direct approach introduces two matrices, namely the upper triangular Bus Injection to Branch Current (BIBC) matrix, and the lower triangular Branch Current to Bus Voltage (BCBV) matrix, in an iterative voltage update scheme.

Despite their computational efficiency, both the BFS and the direct approach do not guarantee finding the solution when it exists.
In particular, \cite{Araujo_EtAl-IJEPES2010} and \cite{Araujo_EtAl-IJEPES2018} demonstrate divergence issues of BFS and a better performance by a variant of NR, known as the current injection method (CIM), in the presence of constant impedance loads and under high loading conditions. Nevertheless, the convergence of NR-based methods is problematic, as well. They depend strongly on initialization, and they do not guarantee, in general, convergence to the correct solution. Moreover, divergence in the aforementioned methods is neither a necessary nor a sufficient condition for the solution's non-existence.

Aiming at addressing these issues, \cite{Trias-PESGM2012} proposes the Holomorphic Embedding Load flow Method (HELM) that employs analytic continuation of complex analytic functions. 
Analytic continuation methods, e.g., Pad\'{e} approximants representing nodal voltage functions in HELM, can evaluate a function beyond the radius of convergence of its respective power series. Using diagonal or near-diagonal Pad\'{e} approximants, which provide maximal analytic continuation, ensures the theoretical convergence of HELM. More specifically, HELM promises to find a solution, if it exists. Further, it identifies the  non-existence of a solution by detecting oscillations in the Pad\'{e} approximant sequence.

Despite the theoretically attractive properties of HELM, it is believed to be computationally expensive. HELM requires solving the recursive solution of a set of linear equations to obtain the coefficients of the voltage power series, followed by another set of linear equations, solved to obtain the coefficients of Pad\'{e} approximants. In fact, \cite{Rao_EtAl-PowerSys2016} shows worse performance compared to NR methods for transmission systems. The performance of HELM in distribution networks has not, however, been thoroughly investigated in the literature. To the best of our knowledge, a first attempt is made by \cite{Rao_EtAl-NAPS2016} in the context of network reduction, and by \cite{Ju_PrePrint2018}, which extends HELM to three-phase unbalanced systems.

In this paper, we exploit the radial and weakly meshed structure of distribution networks, and propose two HELM modifications which require less computational effort, while maintaining the theoretical convergence properties.
We implement the proposed HELM modifications on several test networks, and show that they achieve lower computation times compared with the original HELM implementation. 
We also present comparisons with other popular load flow methods, namely the BFS, the direct approach, the implicit Z-Bus method, and NR method, and we investigate the impact of different loading conditions and ZIP load models on computation times.

The remainder of the paper is structured as follows. In Section \ref{HELM}, we present a brief overview of HELM. In Section \ref{RadHELM}, we introduce our proposed HELM modifications. In Section \ref{SimRes}, we present and discuss the numerical results on several standard test cases. Lastly, we summarize our key findings in Section \ref{Conc} and provide directions for further research.

\section{Holomorphic Embedding Load Flow Method}
\label{HELM}


We consider a distribution network with $N+1$ nodes, where node $0$ is the ``slack node'' and all other nodes are of PQ-type in the set $\mathscr{N} = \{1,..,N\}$. The load flow equations are as follows:
\begin{equation}
    \sum_{j=0}^{N}{y_{ij}}V_{j} = \frac{S_{i}^{*}}{V_{i}^{*}}, \ \ \ \ i \in \mathscr{N},
    \label{loadflow}
\end{equation}
where $y_{ij}$ is the $ij$-th element of the ``$Y$-bus'' admittance matrix, $V_j$ is the complex-valued voltage at node $j$, and $S_i$ is the constant apparent power of load at the $i$-th node. The superscript $^*$ denotes the complex conjugate operator. Because the load flow equation \eqref{loadflow} is not holomorphic, \cite{Trias-PESGM2012} proposes to embed a complex-valued parameter $\alpha$, and obtain the following implicit holomorphic voltage function:
\begin{equation}
    \sum_{j=0}^{N}{y_{ij}}V_{j}(\alpha) = \alpha\frac{S_{i}^{*}}{V_{i}^{*}(\alpha^{*})}, \ \ \ \ i \in \mathscr{N}.
    \label{embedded}
\end{equation}

Note that at $\alpha=0$, a simple solution --- called germ --- can be found under the no-load, no-generation scenario, and the original load flow equations (\ref{loadflow}) can be recovered at $\alpha=1$. 
Following \cite{Trias-PESGM2012}, we replace  $V_{j}(\alpha)$ with its Maclaurin series (employing the germ), i.e.,
\begin{equation}
V_{j}(\alpha) = v_{j}[0]+v_{j}[1]\alpha+...+v_{j}[n]\alpha^{n}+...,  \ j \in \mathscr{N},
\label{VoltageSeries}
\end{equation}
and the reciprocal of the voltage  $V_i(a)$ with power series $W_i$:
\begin{equation}
    \frac{1}{V_{i}(\alpha)} = W_{i}(\alpha) = w_{i}[0]+w_{i}[1]\alpha+...+w_{i}[n]\alpha^{n}+..., \  i \in \mathscr{N},
    \label{reciprocal}
\end{equation}
 which implies that the coefficients of the voltage power series and its reciprocal at node $i \in \mathscr{N}$, $v_{i}$ and $w_{i}$, respectively, are related by their convolution:
\begin{equation}
    w_{i}[n] = \begin{cases}
    \dfrac{1}{v_{i}[n]},       & \quad n = 0,\\
    -\dfrac{\sum_{k=0}^{n-1}{w_i[k]v_i[n-k]}}{v_{i}[0]},  & \quad n \geq 1.
  \end{cases}
    \label{convolution}
\end{equation}

Substituting \eqref{VoltageSeries} and \eqref{reciprocal} into \eqref{embedded}, and equating the coefficients of $\alpha$ at both sides, yields \cite{Trias-PESGM2012}:
\begin{equation}
    \sum_{j=0}^{N}{y_{ij}}v_{j}[n] = {S_{i}^{*}}{w_{i}^{*}[n-1]}, \ \ \ \ i \in \mathscr{N}.
    \label{HELMCoeff}
\end{equation}

HELM requires solving \eqref{convolution} and \eqref{HELMCoeff} recursively, starting with the germ, i.e.,  $v_{i}[0], \ \forall i\in\mathscr{N}$. However, calculating the germ is not straightforward in case shunt admittances are included in the network model. This has motivated \cite{Rao_EtAl-PowerSys2016} to express the admittance matrix $\mathbf{Y}$ as:
\begin{equation}
    \mathbf{Y} = \mathbf{Y}^{s} + \mathbf{Y}^{sh},
    \label{Ybus}
\end{equation}
where $\mathbf{Y}^{s}$ includes the series part of the admittance matrix and $\mathbf{Y}^{sh}$ is a diagonal matrix of the shunt admittances. Using \eqref{Ybus}, \eqref{embedded} becomes \cite{Rao_EtAl-PowerSys2016}:
\begin{equation}
    \sum_{j=0}^{N}{y^{s}_{ij}}V_{j}(\alpha) = \alpha\frac{S_{i}^{*}}{V_{i}^{*}(\alpha^{*})}-\alpha y^{sh}_{i}V_{i}(\alpha), \ \ i \in \mathscr{N},
    \label{embeddedShunt}
\end{equation}
where $y^{s}_{ij}$ is the $ij$-th element of the $\mathbf{Y}^{s}$ matrix, and $y^{sh}_{i}$ is the $i$-th diagonal element of the $\mathbf{Y}^{sh}$ matrix.
From \eqref{embeddedShunt}, the germ calculation becomes straightforward and is given by $v_{i}[0]=V_{0}, \ \forall i \in \mathscr{N}$, where $V_0$ is the slack-node voltage.
Substituting \eqref{VoltageSeries} and \eqref{reciprocal} into \eqref{embeddedShunt}, and equating the coefficients of $\alpha$ at both sides, yields \cite{Rao_EtAl-PowerSys2016}:
\begin{equation}
    \sum_{j=0}^{N}{y^{s}_{ij}}v_{j}[n] = {S_{i}^{*}}{w_{i}^{*}[n-1]}-{y^{sh}_{i}}{v_{i}[n-1]},  \ \ i \in \mathscr{N}.
    \label{HELMCoeffShunt}
\end{equation}


Hence, HELM recursively solves \eqref{convolution} and \eqref{HELMCoeffShunt} to obtain higher-order coefficients of the voltage power series at each step.
The voltage power series may not, however, be converging as it is often the case. Therefore, \cite{Trias-PESGM2012} proposes to evaluate the  voltage at PQ node $i \in \mathscr{N}$ using a rational function ---  the Pad\'{e} approximant --- described as:
\begin{equation}
    V_{i}(\alpha) \approx [L/M]_{V_{i}(\alpha)}=\frac{\zeta_{i}[0]+\zeta_{i}[1]\alpha+...+\zeta_{i}[L]\alpha^{L}}{1+\beta_{i}[1]\alpha+...+\beta_{i}[M]\alpha^{M}},
    \label{PadeLM}
\end{equation}
which is often used as an analytic continuation method to evaluate a function outside the radius of convergence of its power series but within the function's domain.
The commonly used method to calculate the coefficients of the polynomials in \eqref{PadeLM} is the so-called matrix method \cite{Baker_EtAl-Book1996}, in which the denominator coefficients $\beta$ at each node can be obtained by solving a dense linear system of equations \cite{Baker_EtAl-Book1996}, thus time-consuming and prone to errors as the order of the Pad\'{e} approximant increases \cite{Rao_EtAl-IJEPES2018}. The numerator coefficients $\zeta$ can then be found by a back-substitution.

An alternative method is employed in \cite{Rao_EtAl-IJEPES2018} --- the Eta method, which always results in a diagonal Pad\'{e} approximant, thereby retaining the convergence promises of HELM, and which has reportedly better performance in finding a converging voltage solution.
The Eta method obtains a converging voltage power series based on a two dimensional array called the $\eta$ table. Therefore, it allows the evaluation of the nodal voltage at node $i \in \mathscr{N}$ and recursive step $n$, denoted by $V_{i}^{(n)}$.
Its drawback is that it only yields the solution at $\alpha=1$, as opposed to the matrix method that yields a solution as a function of $\alpha$.
Nonetheless, this drawback does not affect load flow problems, since, as already mentioned, the solution at $\alpha=1$ suffices to recover the original load flow equations.


In what follows, we present an outline of the recursive algorithm that describes the original HELM encompassing the Eta method, which we use for comparison purposes.
\begin{enumerate}[label=\textbf{{Step}{{ \arabic*}}:}]
    \setlength{\itemindent}{1.5em}
    \item Set $n=0$, and $v_{i}[0]$ = $V_{0}, \ \forall i \in \mathscr{N}$.
    \item Calculate $w_{i}[n], \ \forall i \in \mathscr{N}$, using \eqref{convolution}.
    \item Set $n=n+1$. Calculate RHS of \eqref{HELMCoeffShunt}.
    \item Obtain $v_i[n], \ \forall i \in \mathscr{N}$, solving \eqref{HELMCoeffShunt}.
    \item Evaluate $V_i^{(n)}, \ \forall i \in \mathscr{N}$, using the Eta method. Check convergence (for tolerance $\epsilon$): If $|V_i^{(n)}-V_i^{(n-1)}|< \epsilon, \forall i \in \mathscr{N}$, then stop; otherwise, recursively apply steps 2--5.
\end{enumerate}

\section{The Proposed Load Flow Methods}
\label{RadHELM}
In this section, we present two HELM modifications that are tailored to distribution networks. They involve two alternative methods for solving \eqref{HELMCoeffShunt} in Step 4 of the original HELM, for a radial or weakly meshed topology. Our work is inspired by two popular load flow algorithms in distribution networks, namely the BFS algorithm and the direct approach of \cite{Teng-PowerDelivery2003}.

The BFS is an efficient algorithm for radial distribution networks. At each iteration $k$, it calculates the nodal voltages and is described as follows:
\begin{gather}
\mathbf{\tilde{A}} \mathbf{I}_{b}^{(k)}=\mathbf{\tilde{I}}\big(\mathbf{\tilde{V}}^{(k-1)}\big), \label{Backward}\\
\mathbf{Y}_{b} \mathbf{\tilde{A}}^{T} ( \mathbf{\tilde{V}}^{(k)} - V_{0} \mathbf{1}_{N} )=\mathbf{I}_{b}^{(k)}, \label{Forward}
\end{gather}
where $\mathbf{\tilde{V}}$, $\mathbf{\tilde{I}}$, and $\mathbf{I}_{b}$ are $N\times 1$ vectors denoting nodal voltages at PQ nodes, current injections at PQ nodes, and branch currents, respectively. Note that shunt admittances are modeled in the current injection vector $\mathbf{\tilde{I}}$. $\mathbf{Y}_{b}$ is a $N\times N$ diagonal matrix whose elements correspond to branch admittances, and $\mathbf{\tilde{A}}$ is obtained from the node to branch incidence matrix $\mathbf{A}$, partitioned as follows:
\begin{equation}
\mathbf{A}=
\begin{pmatrix}
\mathbf{a}_{0}^{T} \\
\mathbf{\tilde{A}} 
\end{pmatrix},
\label{AMat}
\end{equation}
where $\mathbf{a}_{0}^{T}$ and $\mathbf{\tilde{A}}$ are the rows of $\mathbf{A}$ associated with the slack and PQ nodes, respectively. Note that the rhs of \eqref{Backward} explicitly shows the dependence of nodal current injection $\mathbf{\tilde{I}}$ on nodal voltages at the previous iteration $\mathbf{\tilde{V}}^{(k-1)}$. In a radial network, obtaining the vector of branch currents $\mathbf{I}_{b}$ from \eqref{Backward} is equivalent to a \textit{backward sweep}, whereas utilizing $\mathbf{I}_{b}$ in \eqref{Forward} to solve for voltages is equivalent to a \textit{forward sweep} \cite{Zhang_EtAl-PowerSys1997}. A backward sweep calculates the branch currents by traveling backward from the far-end nodes to the slack node, whereas a forward sweep updates nodal voltages by traveling forward from the slack node to the far-end nodes. \eqref{Backward} and \eqref{Forward} are solved iteratively until the difference in nodal voltages at successive iterations is less than a tolerance.

The direct approach \cite{Teng-PowerDelivery2003} is applicable to both radial and weakly meshed systems and is described as follows:
\begin{gather}
\mathbf{\mathbf{\tilde{V}}}^{(k)} =V_{0} \mathbf{1} + (\mbox{DLF})\times\mathbf{\tilde{I}}\big(\mathbf{\tilde{V}}^{(k-1)}\big), \label{DirectApp}
\end{gather}
where $\mbox{DLF}$ is a constant matrix and $\mathbf{1}$ is a $N\times 1$ vector of all ones. Details on how to construct $\mbox{DLF}$ can be found in \cite{Teng-PowerDelivery2003}. 


Observing \eqref{HELMCoeffShunt}, we identify an interesting interpretation, presented in Lemma \ref{Lemma1}.

\newtheorem{lemma}{Lemma}
\begin{lemma}
At the $n$-th voltage power series coefficients calculation, \eqref{HELMCoeffShunt} is equivalent to \eqref{LF} below which describes a load flow problem of a network without shunt elements and only constant current type  loads/injections:
\begin{equation}
    \mathbf{Y}\mathbf{V} = \mathbf{I}.
    \label{LF}    
\end{equation}

Note that $\mathbf{I}$ is the vector of current load/injections and $\mathbf{V}$ the vector of nodal voltages (including the slack bus).
\label{Lemma1}
\end{lemma}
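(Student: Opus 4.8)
The plan is to prove the lemma by a direct identification: I will match each side of \eqref{HELMCoeffShunt}, read at recursive step $n$, with the corresponding side of \eqref{LF}, and check that the resulting system has exactly the structure of an ordinary load flow of a shunt-free network with constant-current injections.

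First I would examine the right-hand side of \eqref{HELMCoeffShunt}. When this equation is solved in Step~4 for index $n$, every quantity appearing on the right, namely $w_i^*[n-1]$ (obtained from $v_i[0],\dots,v_i[n-1]$ through the convolution \eqref{convolution}) and $v_i[n-1]$, has already been produced at earlier recursive steps. Hence ${S_i^*}{w_i^*[n-1]}-{y^{sh}_i}{v_i[n-1]}$ is a known complex constant, independent of the unknowns $v_j[n]$; I would denote its $i$-th component by $I_i$ for $i\in\mathscr{N}$. Being independent of the local voltage coefficient $v_i[n]$, it is precisely a bus current injection of constant-current type. The slack-node component $I_0$ is then whatever current balances the slack row, and $v_0[n]$ is the known embedding/germ coefficient of node $0$.

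Second I would examine the left-hand side. By the splitting \eqref{Ybus}, $\mathbf{Y}^{s}=\mathbf{Y}-\mathbf{Y}^{sh}$ is exactly the admittance matrix obtained from the original network after removing every shunt element: its off-diagonal entries are the negated series branch admittances and its diagonal entries the corresponding row sums, with no shunt term. Therefore $\sum_{j=0}^{N}{y^{s}_{ij}}v_{j}[n]$ is nothing but the nodal-current expression $(\mathbf{Y}\mathbf{V})_i$ for that shunt-free network, with $\mathbf{V}$ the vector whose entries are the sought coefficients $v_j[n]$, the slack entry $v_0[n]$ included. Combining the two identifications, \eqref{HELMCoeffShunt} over $i\in\mathscr{N}$ together with the slack row is precisely the system $\mathbf{Y}\mathbf{V}=\mathbf{I}$ of \eqref{LF} for a network with no shunts and only constant-current loads/injections, which is the claim.

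The only step needing care — and the one I would flag as the main (admittedly minor) obstacle — is the bookkeeping at node $0$: one must keep $v_0[n]$ fixed as the slack value and move the $j=0$ term of the sum to the right-hand side (equivalently, pass to the reduced system indexed by $\mathscr{N}$), so that \eqref{LF} is well posed in exactly the same sense as a constant-current load flow with a designated slack bus. With that convention in place the equivalence is an identity rather than an approximation, which is what makes the subsequent BFS- and direct-approach-based solvers for Step~4 legitimate.
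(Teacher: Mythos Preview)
Your proposal is correct and follows essentially the same approach as the paper: identify the right-hand side of \eqref{HELMCoeffShunt} at step $n$ as a known constant $I_i$, and recognize the left-hand side as the product of the shunt-free admittance matrix $\mathbf{Y}^s$ with the vector of unknown coefficients $v_j[n]$. Your treatment is in fact more careful than the paper's, particularly in the explicit handling of the slack row, but the underlying argument is the same direct identification.
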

\begin{proof}
At the $n$-th voltage power series coefficients calculation, the rhs of \eqref{HELMCoeffShunt} is constant, i.e., $I_{i} = {S_{i}^{*}}{w_{i}^{*}[n-1]}-{y^{sh}_{i}}{v_{i}[n-1]}, \ \forall i \in \mathscr{N}$. At the lhs, setting $y_{ij} = y^{s}_{ij}$, indicates that the corresponding network has a bus admittance matrix $\mathbf{Y}^s$, i.e., without shunt elements.
\end{proof}

The network introduced in Lemma \ref{Lemma1} has some attractive properties.

\begin{lemma}
The load flow problem of radial networks described in Lemma \ref{Lemma1} can be solved in a single iteration of the BFS algorithm (one backward and one forward sweep).
\label{Lemma2}
\end{lemma}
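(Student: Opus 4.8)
The plan is to walk through the two BFS steps \eqref{Backward}--\eqref{Forward} for the network of Lemma \ref{Lemma1} and show that neither requires iteration. The conceptual heart is that BFS needs successive iterations \emph{only} because, for constant-power loads, the nodal current injections depend on the unknown voltages — this is exactly why the right-hand side of \eqref{Backward} is written as $\mathbf{\tilde{I}}(\mathbf{\tilde{V}}^{(k-1)})$. In the network of Lemma \ref{Lemma1}, by contrast, the loads/injections are purely constant-current, so $\mathbf{I}=\mathbf{\tilde{I}}$ is a fixed, known vector, independent of $\mathbf{\tilde{V}}$. Hence the right-hand side of the backward-sweep equation is available before any voltage is computed.

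Next I would exploit the radial topology. A radial network with $N+1$ nodes is a spanning tree with exactly $N$ branches, so, by the partitioning \eqref{AMat}, $\mathbf{\tilde{A}}$ is a square $N\times N$ matrix; deleting the slack row from the incidence matrix of a connected tree leaves a nonsingular matrix, and $\mathbf{Y}_b\mathbf{\tilde{A}}^{T}$ is likewise nonsingular since $\mathbf{Y}_b$ is diagonal with nonzero branch admittances. Therefore \eqref{Backward} determines $\mathbf{I}_b=\mathbf{\tilde{A}}^{-1}\mathbf{I}$ uniquely, and this computation is precisely one \emph{backward sweep}: after a leaf-to-root ordering, $\mathbf{\tilde{A}}$ is triangular, so solving for $\mathbf{I}_b$ is the current-summation back-substitution from the far-end nodes toward the slack. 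With $\mathbf{I}_b$ known, \eqref{Forward} gives $\mathbf{\tilde{V}}=V_0\mathbf{1}_N+(\mathbf{Y}_b\mathbf{\tilde{A}}^{T})^{-1}\mathbf{I}_b$ uniquely, which is one \emph{forward sweep} (the voltage-drop pass from the slack toward the far-end nodes). Since no quantity in either pass has to be refreshed with updated voltages, a single iteration — one backward and one forward sweep — completes the algorithm, and a second iteration would reproduce the same $\mathbf{\tilde{V}}$, so convergence is detected.

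It remains to confirm that the vector produced by this single pass actually solves \eqref{LF} for the network of Lemma \ref{Lemma1}, i.e., $\mathbf{Y}^{s}\mathbf{V}=\mathbf{I}$. Eliminating $\mathbf{I}_b$ between \eqref{Backward} and \eqref{Forward} and left-multiplying by $\mathbf{\tilde{A}}$ yields $\mathbf{\tilde{A}}\mathbf{Y}_b\mathbf{\tilde{A}}^{T}(\mathbf{\tilde{V}}-V_0\mathbf{1}_N)=\mathbf{I}$. Using the defining property that every branch column of the incidence matrix sums to zero, i.e., $\mathbf{a}_0^{T}=-\mathbf{1}_N^{T}\mathbf{\tilde{A}}$, the PQ-node rows of $\mathbf{Y}^{s}=\mathbf{A}\mathbf{Y}_b\mathbf{A}^{T}$ read $\mathbf{\tilde{A}}\mathbf{Y}_b\mathbf{\tilde{A}}^{T}\mathbf{\tilde{V}}+\mathbf{\tilde{A}}\mathbf{Y}_b\mathbf{a}_0 V_0=\mathbf{\tilde{A}}\mathbf{Y}_b\mathbf{\tilde{A}}^{T}(\mathbf{\tilde{V}}-V_0\mathbf{1}_N)$, which matches the expression above and therefore equals $\mathbf{I}$; the slack row is taken care of by the reference voltage. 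I expect the main obstacle to be the bookkeeping in this last step — the partitioning of $\mathbf{A}$, the sign convention of the incidence matrix, and the b1placement of the slack bus — rather than anything conceptually deep, since the essential point (constant-current injections remove the voltage dependence that forces BFS to iterate) is immediate.
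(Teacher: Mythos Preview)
Your argument is correct and follows essentially the same route as the paper: both rely on the factorization $\mathbf{Y}^{s}=\mathbf{A}\mathbf{Y}_{b}\mathbf{A}^{T}$ together with the column-sum identity $\mathbf{a}_{0}^{T}=-\mathbf{1}_{N}^{T}\mathbf{\tilde{A}}$ to identify the PQ-node rows of \eqref{LF} with the single backward/forward pair \eqref{Backward1}--\eqref{Forward1}. The only cosmetic difference is direction --- the paper starts from \eqref{LF} and reduces it to one BFS pass, whereas you run one BFS pass and then verify it satisfies \eqref{LF} --- and you add the (helpful but not strictly needed) invertibility justification for $\mathbf{\tilde{A}}$ and $\mathbf{Y}_{b}\mathbf{\tilde{A}}^{T}$.
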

\begin{proof}
The $\mathbf{Y}$-Bus admittance matrix can be partitioned as:
\begin{equation}
\begin{pmatrix}
  y_{0} & \mathbf{y}^{T}\\
  \mathbf{y} & \mathbf{\tilde{Y}} 
 \end{pmatrix}
\begin{pmatrix}
  V_{0}\\
  \mathbf{\tilde{V}} 
 \end{pmatrix}
=
\begin{pmatrix}
  I_{0}\\
  \mathbf{\tilde{I}} 
 \end{pmatrix}
,
 \label{YsMat}
\end{equation}
where $\mathbf{y}$ is a $N\times 1$ vector describing the mutual admittances between the slack  and PQ nodes,
${y}_{0}$ is the self admittance of the slack bus and $\mathbf{\tilde{Y}}$ is an $N\times N$ matrix.
Since the $\mathbf{Y}$-Bus described in Lemma \ref{Lemma1} does not include shunt admittances, we can equivalently express it as:
\begin{equation}
\mathbf{Y}= \mathbf{A} \mathbf{Y}_{b} \mathbf{A}^{T}.
 \label{Ys}
\end{equation}

Using \eqref{AMat} and \eqref{YsMat}, we get from \eqref{Ys}:
\begin{equation}
\mathbf{y} = \mathbf{\tilde{A}} \mathbf{Y}_{b} \mathbf{a}_{0}, \ \ \ \ \
\mathbf{\tilde{Y}} = \mathbf{\tilde{A}} \mathbf{Y}_{b} \mathbf{\tilde{A}}^{T}.
\label{Ys1}
\end{equation}

Expressing \eqref{LF} for PQ-type nodes using \eqref{YsMat} yields:
\begin{equation}
 \mathbf{y} V_{0}+ \mathbf{\tilde{Y}} \mathbf{\tilde{V}} = \mathbf{\tilde{I}}.
 \label{Ys2}
\end{equation}

Substituting \eqref{Ys1} into \eqref{Ys2}, we get:
\begin{equation}
\mathbf{\tilde{A}} \mathbf{Y}_{b} ( \mathbf{a}_{0} V_{0} + \mathbf{\tilde{A}}^{T} \mathbf{\tilde{V}})=\mathbf{\tilde{I}}.
\label{Ys3}
\end{equation}

Note that sum of the elements in each column of $\mathbf{A}$ should be zero, which implies that $\mathbf{a}^{T}_{0} = - \mathbf{\tilde{A}}^{T} \mathbf{1}$. Therefore, \eqref{Ys3} can be written as:
\begin{equation}
\mathbf{\tilde{A}} \mathbf{Y}_{b} \mathbf{\tilde{A}}^{T}  ( \mathbf{\tilde{V}} - V_{0} \mathbf{1} )=\mathbf{\tilde{I}}, \label{FinalDeriv}
\end{equation}
which can be expressed equivalently as:
\begin{gather}
\mathbf{\tilde{A}} \mathbf{I}_{b}=\mathbf{\tilde{I}}, \label{Backward1}\\
\mathbf{Y}_{b} \mathbf{\tilde{A}}^{T} ( \mathbf{\tilde{V}} - V_{0} \mathbf{1} )=\mathbf{I}_{b}, \label{Forward1}
\end{gather}
representing a single backward and forward sweep, as discussed for \eqref{Backward} and \eqref{Forward}. We note that a similar argument was made in \cite{Zhang_EtAl-PowerSys1997} for a different purpose.
\end{proof}

\begin{lemma}
The load flow problem of radial or weakly meshed networks described in Lemma \ref{Lemma1} can be solved in a single iteration of the direct approach.
\label{Lemma3}
\end{lemma}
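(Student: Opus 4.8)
The plan is to reduce the claim to the single observation that already drives Lemma \ref{Lemma2}: in the network of Lemma \ref{Lemma1} the nodal current injections are \emph{data}, not functions of the unknown voltages. Recall that the direct approach iterates \eqref{DirectApp}, in which the matrix $\mbox{DLF}$ is a fixed constant and only the injection vector $\mathbf{\tilde{I}}$ is refreshed between iterations. First I would note that, by Lemma \ref{Lemma1}, the right-hand side of \eqref{HELMCoeffShunt} at recursion step $n$ --- which plays the role of $\mathbf{\tilde{I}}$ --- is completely determined by the previously computed coefficients $v_i[n-1]$ and $w_i[n-1]$, and in particular does not depend on the current iterate $\mathbf{\tilde{V}}^{(k-1)}$. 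Hence $\mathbf{\tilde{I}}\big(\mathbf{\tilde{V}}^{(k-1)}\big)$ is the same vector $\mathbf{\tilde{I}}$ for every $k$, so the right-hand side of \eqref{DirectApp} is identical at every iteration and the scheme attains its fixed point already at $k=1$, namely $\mathbf{\tilde{V}}^{(1)} = V_0 \mathbf{1} + (\mbox{DLF})\,\mathbf{\tilde{I}}$.

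Next I would verify that this fixed point is indeed the solution of \eqref{LF} --- equivalently of \eqref{Ys2} --- i.e. that $\mbox{DLF} = \mathbf{\tilde{Y}}^{-1}$. For a radial network this follows by combining the single backward and forward sweeps \eqref{Backward1}--\eqref{Forward1} established in Lemma \ref{Lemma2}: since $\mathbf{\tilde{A}}$ is square and nonsingular in the radial case, \eqref{Backward1} gives $\mathbf{I}_b = \mathbf{\tilde{A}}^{-1}\mathbf{\tilde{I}}$ and \eqref{Forward1} gives $\mathbf{\tilde{V}} - V_0\mathbf{1} = (\mathbf{\tilde{A}}^{T})^{-1}\mathbf{Y}_b^{-1}\mathbf{I}_b$, whence $\mathbf{\tilde{V}} = V_0\mathbf{1} + (\mathbf{\tilde{A}}\mathbf{Y}_b\mathbf{\tilde{A}}^{T})^{-1}\mathbf{\tilde{I}} = V_0\mathbf{1} + \mathbf{\tilde{Y}}^{-1}\mathbf{\tilde{I}}$, using \eqref{Ys1}; thus $\mbox{DLF}=\mathbf{\tilde{Y}}^{-1}$, which matches the $\mbox{BIBC}$/$\mbox{BCBV}$ construction of \cite{Teng-PowerDelivery2003}.

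For weakly meshed networks the incidence-matrix argument above no longer applies verbatim, since there are more branches than PQ nodes and $\mathbf{\tilde{A}}$ is not square; here I would instead appeal directly to the construction of $\mbox{DLF}$ in \cite{Teng-PowerDelivery2003}, where the loop-closing branches are absorbed through the breakpoint/compensation step, and argue that the resulting $\mbox{DLF}$ still realizes the linear map $\mathbf{\tilde{I}} \mapsto \mathbf{\tilde{Y}}^{-1}\mathbf{\tilde{I}}$ from nodal injections to nodal voltage drops. I expect this weakly meshed case to be the only real obstacle: the ``constant-injection'' observation and the radial computation are immediate, so the remaining work is to confirm that Teng's compensation-based $\mbox{DLF}$ coincides with $\mathbf{\tilde{Y}}^{-1}$ --- equivalently, that a single pass of \eqref{DirectApp} solves the linear system \eqref{Ys2} exactly --- which can either be cited from \cite{Teng-PowerDelivery2003} or checked by a short computation analogous to \eqref{Ys}--\eqref{FinalDeriv}.
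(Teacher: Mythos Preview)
Your proposal is correct, but it takes a longer path than the paper and overstates the difficulty of the weakly meshed case. The paper's proof simply writes one iteration of the direct approach as $\mathbf{\tilde{V}} = V_{0}\mathbf{1} + (\mbox{DLF})\mathbf{\tilde{I}}$ and compares it directly with \eqref{FinalDeriv}, concluding $\mbox{DLF} = (\mathbf{\tilde{A}}\mathbf{Y}_b\mathbf{\tilde{A}}^{T})^{-1}$; that is the entire argument. Your detour through a fixed-point observation followed by a separate verification that $\mbox{DLF}=\mathbf{\tilde{Y}}^{-1}$ is fine but unnecessary --- the paper collapses both into one comparison.

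More importantly, your concern that ``the incidence-matrix argument no longer applies verbatim'' for weakly meshed networks is misplaced. The derivation of \eqref{FinalDeriv} from \eqref{Ys}--\eqref{Ys3} never uses invertibility of $\mathbf{\tilde{A}}$; it only uses $\mathbf{Y}=\mathbf{A}\mathbf{Y}_b\mathbf{A}^{T}$ and $\mathbf{a}_0^{T}=-\mathbf{\tilde{A}}^{T}\mathbf{1}$, both of which hold for any connected network without shunts regardless of the number of branches. Hence $\mathbf{\tilde{Y}}=\mathbf{\tilde{A}}\mathbf{Y}_b\mathbf{\tilde{A}}^{T}$ is still an $N\times N$ invertible matrix in the weakly meshed case, and the same comparison with \eqref{DirDV} goes through uniformly. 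You therefore do not need a separate appeal to Teng's breakpoint/compensation construction; what you flag as ``the only real obstacle'' is in fact handled by the same identity you already use for the radial case.
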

\begin{proof}
A single iteration of the direct approach is described as:
\begin{gather}
\mathbf{\tilde{V}} = V_{0} \mathbf{1} + (\mbox{DLF})\mathbf{\tilde{I}}. \label{DirDV}
\end{gather}

A direct comparison of \eqref{DirDV} with \eqref{FinalDeriv} yields:
\begin{gather}
\mbox{DLF} = (\mathbf{\tilde{A}} \mathbf{Y}_{b} \mathbf{\tilde{A}}^{T})^{-1}. \label{DLFDer}
\end{gather}

Therefore, \eqref{LF} can be solved using a single iteration of the direct approach described by \eqref{DirDV}.
 \end{proof}

We note that, for a radial network, $\mbox{DLF}=\mbox{BCBV}\times\mbox{BIBC}$ \cite{Teng-PowerDelivery2003}. Therefore, \eqref{DirDV} can be expressed equivalently as:
\begin{gather}
\mathbf{I}_{b}=(\mbox{BIBC})\mathbf{\tilde{I}}, \label{BIBC}\\
\mathbf{\tilde{V}} = V_{0} \mathbf{1}  + (\mbox{BCBV})\mathbf{I}_{b}. \label{BCBV}
\end{gather}

\newtheorem{corol}{Corollary}
\begin{corol}
For a radial network, the backward \eqref{Backward} and forward \eqref{Forward} sweeps are expressed equivalently in a matrix form using the direct approach, with:
\begin{gather}
\mbox{BIBC} = \mathbf{\tilde{A}}^{-1}, \label{BackBIBC}\\
\mbox{BCBV} = (\mathbf{Y}_{b} \mathbf{\tilde{A}}^{T})^{-1}, \label{ForwBCBV}
\end{gather}
where $\mathbf{\tilde{A}}$ is an $N\times N$ matrix.
\label{corol1}
\end{corol}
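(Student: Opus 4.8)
The plan is to obtain the corollary by simply inverting the single‑iteration backward and forward sweep equations \eqref{Backward1}--\eqref{Forward1} established in Lemma \ref{Lemma2} and matching them term‑by‑term against the direct‑approach representation \eqref{BIBC}--\eqref{BCBV}; the only substantive point is to argue that the matrices being inverted are nonsingular under the radial assumption.

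First I would observe that a radial network on $N+1$ nodes is a tree, hence has exactly $N$ branches, so the node‑to‑branch incidence matrix $\mathbf{A}$ is $(N+1)\times N$ and the reduced matrix $\mathbf{\tilde{A}}$ from \eqref{AMat} (the slack row deleted) is square, $N\times N$. I would then invoke the standard fact that the reduced incidence matrix of a connected graph with respect to a spanning tree is nonsingular (here the graph is itself the tree, so $\det\mathbf{\tilde{A}}=\pm 1$), which gives the existence of $\mathbf{\tilde{A}}^{-1}$. Since $\mathbf{Y}_{b}$ is diagonal with nonzero branch admittances on the diagonal it is invertible, and therefore so is $\mathbf{Y}_{b}\mathbf{\tilde{A}}^{T}$.

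With invertibility in hand, the remainder is a one‑line computation. From the backward sweep \eqref{Backward1}, $\mathbf{\tilde{A}}\mathbf{I}_{b}=\mathbf{\tilde{I}}$ yields $\mathbf{I}_{b}=\mathbf{\tilde{A}}^{-1}\mathbf{\tilde{I}}$; since $\mathbf{\tilde{I}}$ is arbitrary, comparing with \eqref{BIBC} gives $\mbox{BIBC}=\mathbf{\tilde{A}}^{-1}$. From the forward sweep \eqref{Forward1}, $\mathbf{Y}_{b}\mathbf{\tilde{A}}^{T}(\mathbf{\tilde{V}}-V_{0}\mathbf{1})=\mathbf{I}_{b}$ yields $\mathbf{\tilde{V}}=V_{0}\mathbf{1}+(\mathbf{Y}_{b}\mathbf{\tilde{A}}^{T})^{-1}\mathbf{I}_{b}$, and comparing with \eqref{BCBV} gives $\mbox{BCBV}=(\mathbf{Y}_{b}\mathbf{\tilde{A}}^{T})^{-1}$. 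As a consistency check I would note that $\mbox{BCBV}\times\mbox{BIBC}=(\mathbf{Y}_{b}\mathbf{\tilde{A}}^{T})^{-1}\mathbf{\tilde{A}}^{-1}=(\mathbf{\tilde{A}}\mathbf{Y}_{b}\mathbf{\tilde{A}}^{T})^{-1}$, which agrees with $\mbox{DLF}$ in \eqref{DLFDer}. The main obstacle, such as it is, is precisely this invertibility step --- recognizing that radiality forces $\mathbf{A}$ to become square after deleting the slack row and that a tree's reduced incidence matrix is unimodular; once that is granted, the identification of $\mbox{BIBC}$ and $\mbox{BCBV}$ is immediate.
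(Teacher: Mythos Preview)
Your proposal is correct and follows essentially the same approach as the paper, which simply states that the corollary is obtained by direct comparison of \eqref{BIBC}--\eqref{BCBV} with \eqref{Backward1}--\eqref{Forward1}. Your added justification of the invertibility of $\mathbf{\tilde{A}}$ and $\mathbf{Y}_{b}\mathbf{\tilde{A}}^{T}$ via the tree structure, and the consistency check against \eqref{DLFDer}, go beyond what the paper provides but are entirely compatible with it.
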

\begin{proof}
Corollary \ref{corol1} is derived by direct comparison of \eqref{BIBC} and \eqref{BCBV} with \eqref{Backward1} and \eqref{Forward1}, respectively.
\end{proof}

\newtheorem{prop}{Proposition}
\begin{prop}
The solution of \eqref{HELMCoeffShunt} for radial or weakly meshed networks is obtained by performing a single iteration of the BFS (radial) or the direct approach (radial/weakly meshed) methods.
\label{prop1}
\end{prop}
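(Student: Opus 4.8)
The plan is to obtain Proposition~\ref{prop1} as an immediate consequence of Lemmas~\ref{Lemma1}--\ref{Lemma3}, so the proof is essentially a matter of chaining them together while making explicit the one point that does the real work, namely that the right-hand side of \eqref{HELMCoeffShunt} is \emph{frozen} at step $n$.

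First I would fix the recursion index $n$ and invoke Lemma~\ref{Lemma1}: since $w_i[n-1]$ and $v_i[n-1]$ have already been computed at the previous step, the vector with entries $I_i = S_i^{*}w_i^{*}[n-1]-y^{sh}_i v_i[n-1]$, $i\in\mathscr{N}$, is a known constant, and \eqref{HELMCoeffShunt} is exactly the linear system \eqref{LF}, $\mathbf{Y}\mathbf{V}=\mathbf{I}$, for a shunt-free network (with admittance matrix $\mathbf{Y}^{s}=\mathbf{A}\mathbf{Y}_b\mathbf{A}^{T}$) driven by constant current injections and with the slack entry $V_0$ fixed. This reduces Step~4 of HELM to a single, non-iterative linear solve.

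Next I would split into the two stated cases. For a radial network, Lemma~\ref{Lemma2} shows that \eqref{LF} is equivalent to the pair \eqref{Backward1}--\eqref{Forward1}, i.e.\ exactly one backward sweep followed by one forward sweep; equivalently, by Corollary~\ref{corol1}, to the matrix operations \eqref{BIBC}--\eqref{BCBV}. For a radial or weakly meshed network, Lemma~\ref{Lemma3} shows that \eqref{LF} is equivalent to the single update \eqref{DirDV} with $\mbox{DLF}=(\mathbf{\tilde{A}}\mathbf{Y}_b\mathbf{\tilde{A}}^{T})^{-1}$. In either case the computation is carried out once, with no convergence test, because the driving term $\mathbf{I}$ does not depend on the unknown $v_j[n]$.

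The only point that needs care --- and the one I would stress --- is precisely this contrast with the original BFS \eqref{Backward}--\eqref{Forward} and direct approach \eqref{DirectApp}, which are iterative \emph{solely} because there $\mathbf{\tilde{I}}$ is a function of $\mathbf{\tilde{V}}^{(k-1)}$; in the present setting that functional dependence is absent, so the fixed point is attained in one pass. A secondary, routine check is that $\mathbf{\tilde{A}}$ (and hence $\mathbf{\tilde{A}}\mathbf{Y}_b\mathbf{\tilde{A}}^{T}$ and $\mathbf{Y}_b\mathbf{\tilde{A}}^{T}$) is invertible for a connected radial network with $N+1$ nodes and $N$ branches, which is already implicit in the proofs of Lemmas~\ref{Lemma2} and~\ref{Lemma3}, while the restriction to the direct approach in the weakly meshed case is what keeps the argument valid there. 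Assembling these observations yields the claim.
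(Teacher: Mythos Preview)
Your proposal is correct and follows essentially the same route as the paper: invoke Lemma~\ref{Lemma1} to recast \eqref{HELMCoeffShunt} as the constant-current, shunt-free load flow \eqref{LF}, then apply Lemma~\ref{Lemma2} for the radial case and Lemma~\ref{Lemma3} for the radial/weakly meshed case. Your additional remarks on the frozen right-hand side and the invertibility of $\mathbf{\tilde{A}}$ are helpful elaborations but do not alter the argument's structure.
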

\begin{proof}
The proof of Proposition \ref{prop1} is straightforward, using \emph{(i)} Lemma \ref{Lemma1} for a network with $y_{ij} = y^{s}_{ij}$, and constant currents at the $n$-th voltage power series coefficients calculation, $I_{i} = {S_{i}^{*}}{w_{i}^{*}[n-1]}-{y^{sh}_{i}}{v_{i}[n-1]}, \ \forall i \in \mathscr{N}$, \emph{(ii)} Lemma \ref{Lemma2} for radial networks, and \emph{(iii)} Lemma \ref{Lemma3} for both radial and weakly meshed networks.
\end{proof}

In what follows, we introduce the two HELM modifications that modify Step 4, solving \eqref{HELMCoeffShunt} based on Proposition \ref{prop1}. 
The first algorithm, referred to as \textit{S-HELM}, uses the BFS algorithm and is suitable for radial networks.
The second algorithm, referred to as \textit{D-HELM}, uses the direct approach of \cite{Teng-PowerDelivery2003} and can be applied to both radial and weakly meshed networks. We summarize the modified steps below.

The S-HELM algorithm modifies Step 4 as follows:

\begin{enumerate}[label=\textbf{{Step}{{ \arabic*}}-S.a:}]
    \setlength{\itemindent}{3.2em}
  \setcounter{enumi}{3}
    \item Calculate branch currents using the nodal injections given by Step 3 of HELM, employing a backward current summation scheme.
\end{enumerate}
\begin{enumerate}[label=\textbf{{Step}{{ \arabic*}}-S.b:}]
    \setlength{\itemindent}{3.3em}
  \setcounter{enumi}{3}
    \item Solve for $v_i[n], \ \forall i \in \mathscr{N}$ using branch currents calculated in the previous step, employing a forward voltage update scheme.
\end{enumerate}

The D-HELM algorithm modifies Step 4 of the original HELM algorithm as follows:

\begin{enumerate}[label=\textbf{{Step}{{ \arabic*}}-D:}]
    \setlength{\itemindent}{2.7em}
  \setcounter{enumi}{3}
    \item Solve \eqref{DirDV} for $v_i[n]=V_{i}, \ \forall i \in \mathscr{N}$, where $I_i$ is given by Step 3 of HELM.
\end{enumerate}


\section{Numerical Results}
\label{SimRes}
We tested the proposed algorithms on several radial and weakly meshed distribution networks and we compared the results with other methods, namely the original HELM (using the Eta method), the BFS \cite{Shirmohammadi_EtAl-powersys1988}, the direct approach, \cite{Teng-PowerDelivery2003}, the implicit Z-bus \cite{Chen_EtAl-powerdelivery1991}, and NR. We modeled the methods in MATLAB v. 9.4 and used a Dell XPS i7 at 1.8 GHZ CPU with 16 GB RAM for obtaining the numerical results.




The test networks included IEEE 13, 18, 33, 37, 69, 123, 141 and 8500 node radial test systems. The data for 18, 33, 69, and 141 bus networks can be found in MATPOWER \cite{Zimmerman_EtAl-PowerSys1999}. We used the single phase equivalents of IEEE 13, 37 and 123 node networks derived by \cite{Bazrafshan_EtAl-PowerSys2018}, and we also obtained the single phase equivalent of the IEEE 8500 node distribution network that includes about 2500 nodes. The convergence tolerance was $\epsilon = 10^{-6}$. The maximum error in nodal voltage magnitudes compared with Implicit Z-bus, BFS, the direct approach, and NR was observed to be less than the tolerance, thus verifying the accuracy of the proposed HELM modifications.

In Table \ref{Tab:CompTime}, we present the computational times for all methods and test networks. They include the time required to run only the main loop of the methods; they do not include pre-processing time, e.g., branch ordering in BFS or LU factorization in original HELM. In order to derive accurate results, we ran the main loop of each method for $100,000$ times and obtained the mean computation times. 
The results indicate that at least one of the proposed HELM modifications (S-HELM and D-HELM) outperforms the original HELM --- in fact in all but one networks both modifications outperform the original HELM. 
D-HELM appears more efficient in smaller networks outperforming both original HELM and S-HELM, but achieves similar times with HELM for the large network.
For the latter network, S-HELM performs better than D-HELM and original HELM.
Overall, the BFS algorithm and the direct approach perform better, followed by the implicit Z-bus. NR is generally slower than HELM except for the 8500-node network. Interestingly but unsurprisingly, comparing Implicit Z-bus with BFS and the direct approach yields similar results to comparing HELM with S-HELM and D-HELM; note that HELM and Implicit Z-bus employ LU factorization, whereas S-HELM and D-HELM are based on the BFS and the direct approach, respectively.


\begin{table}
\renewcommand{\arraystretch}{1.3}
\caption{Load Flow Computation Time (in Milliseconds) on Several IEEE Radial Distribution Test Systems}
\label{Tab:CompTime}
\centering
\begin{tabular}{c|c|c|c|c|c|c|c|c}
    \hline
    \multirow{2}{*}{Methods}&\multicolumn{8}{c}{Test Systems}\\
    \cline{2-9}
    & 13 & 18 & 33 & 37 & 69 & 123 & 141 & 8500\\
    \hline
    \hline
    HELM & 0.30 & 0.41 & 0.56 & 0.46 & 0.89 & 1.50 & 1.63 & 58.3\\
    \hline
    S-HELM & 0.24 & 0.36 & 0.48 & 0.40 & 0.80 & 1.30 & 1.44 & 51.5\\
    \hline
    D-HELM & 0.22 & 0.31 & 0.41 & 0.32 & 0.68 & 1.19 & 1.32 & 60.6\\
    \hline
    Impl.Z & 0.14 & 0.21 & 0.19 & 0.12 & 0.27 & 0.32 & 0.44 & 13.0\\
    \hline
    BFS & 0.05 & 0.10 & 0.08 & 0.08 & 0.13 & 0.22 & 0.21 & 7.92\\
    \hline
    Direct & 0.06 & 0.11 & 0.08 & 0.07 & 0.11 & 0.18 & 0.17 & 19.1\\
    \hline
    NR & 0.39 & 0.45 & 0.64 & 0.82 & 1.05 & 2.55 & 2.00 & 38.5 \\
    \hline
\end{tabular}
\end{table}

Table \ref{Tab:Percent} presents the average percentage of computation time spent on each step of HELM. We observe that the percentage of Steps 2,3 and 4 decrease with the size of the network, whereas the percentage of Step 5 increases with the size of the network. 

The results in Table \ref{Tab:CompTime} and \ref{Tab:Percent} indicate that the proposed modifications achieve computational savings that range from 53\% to 92\% in Step 4 of HELM, and from 12\% to 30\% overall (including all HELM steps).

Further, as it is also discussed in \cite{Rao_EtAl-PowerSys2016}, since the voltage evaluation and convergence check performed in Step 5 is not used as input in Steps 2--4, one can proceed from Step 4 to Step 2 while in parallel checking convergence in Step 5. If convergence is reached, then algorithm terminates, otherwise convergence is checked again at the next step.
Taking into account the parallel implementation of Step 5, the computational savings of the proposed modifications (including all steps) range from 24\% to 50\% overall.

\begin{table}
\renewcommand{\arraystretch}{1.3}
\caption{Average Percentage of Computation Time Spent on each Step of HELM in Several IEEE Radial Distribution Test Systems}
\label{Tab:Percent}
\centering
\begin{tabular}{c|c|c|c|c|c|c|c|c}
    \hline
    \multirow{2}{*}{Steps}&\multicolumn{8}{c}{Test Systems}\\
    \cline{2-9}
    & 13 & 18 & 33 & 37 & 69 & 123 & 141 & 8500\\
    \hline
    \hline
    2, 3 & 32\% & 37\% & 11\% & 21\% & 5\% & 10\% & 5\% & 4\% \\
    \hline
    4 & 40\% & 31\% & 34\% & 33\% & 29\% & 27\% & 22\% & 22\% \\
    \hline
    5 & 28\% & 32\% & 55\% & 46\% & 66\% & 63\% & 73\% & 74\% \\
    \hline
\end{tabular}
\end{table}



We also considered weakly meshed variations of the IEEE 18, 33 and 69 node networks. Table \ref{Tab:CompTimeWM} shows that D-HELM requires less computation time compared with HELM, and it also outperforms the NR method. However, we acknowledge that more testing is required for weakly meshed networks.

\begin{table}
\renewcommand{\arraystretch}{1.3}
\caption{Load Flow Computation Time (in Milliseconds) on Weakly Meshed Variants of IEEE Distribution Test Systems}
\label{Tab:CompTimeWM}
\centering
\begin{tabular}{c|c|c|c}
    \hline
    \multirow{2}{*}{Methods}&\multicolumn{3}{c}{Test Systems}\\
    \cline{2-4}
    & 18w & 33w & 69w \\
    \hline
    \hline
    HELM & 0.42 & 0.43 & 0.52 \\
    \hline
    D-HELM & 0.27 & 0.32 & 0.39 \\
    \hline
    Impl.Z & 0.18 & 0.16 & 0.19 \\
    \hline
    Direct & 0.10 & 0.07 & 0.14 \\
    \hline
    NR & 0.53 & 0.73 & 1.21 \\
    \hline
\end{tabular}
\end{table}

Lastly, we evaluated the performance of the proposed modifications under different loading conditions. We selected the IEEE 123-node radial distribution network, which contains a complete ZIP load model and considered medium and high loading conditions. The load factors for constant power, constant current, and constant impedance loads were 4, 20, and 40, respectively, for the medium loading conditions, and 7, 50, and 60, respectively for the high loading conditions. We present the results in Table \ref{Tab:LoadModel}.  
We observe that the proposed modifications outperform the original HELM in all scenarios. Further, while the BFS, the direct approach and the NR method diverge under certain loading conditions, HELM and its proposed modifications manage to find a solution.


\begin{table}

\renewcommand{\arraystretch}{1.3}
\caption{Load Flow Computation Time (in Milliseconds) for IEEE 123 Node Distribution Network in Different Loading Scenarios}
\label{Tab:LoadModel}
\centering
\begin{tabular}{c|c|c|c|c|c|c}
    \hline
    \multirow{2}{*}{Methods}&\multicolumn{2}{c|}{Const. Power}&\multicolumn{2}{c|}{Const. Current}&\multicolumn{2}{c}{Const. Impedance}\\
    \cline{2-7}
    & Medium & High & Medium & High & Medium & High\\
    \hline
    \hline
    HELM  & 2.5 & 13.3 & 2.42 & 10.1 & 2.96 & 4.86\\
    \hline
    S-HELM  & 2.2 & 12.7 & 2.24 & 9.64 & 2.75 & 4.60\\
    \hline
    D-HELM  & 2.0 & 12.2 & 1.95 & 9.05 & 2.45 & 4.21\\
    \hline
    Impl.Z  & 0.58 & 1.35 & 0.40 & 0.50 & 0.40 & 0.46\\
    \hline
    BFS  & 0.46 & 1.05 & 0.37 & 0.45 & 1.18 & Div\\
    \hline
    Direct  & 0.29 & 0.65 & 0.23 & 0.28 & 0.71 & Div\\
    \hline
    NR  & 2.45 & 3.12 & Div & Div & 3.06 & 3.06 \\
    \hline
\end{tabular}
\end{table}

\section{Conclusions and Further Research}
\label{Conc}
This paper presents two HELM modifications that exploit radial and weakly meshed structure of distribution networks and require less computation effort. Numerical experimentation demonstrates overall time savings of up to 30\% on IEEE radial distribution test cases. Furthermore, the proposed HELM modifications are shown to be robust against different loading types and conditions. In our future research, we aim at (\emph{i}) further investigating the performance in weakly meshed networks, (\emph{ii}) extending the proposed modifications to three-phase unbalanced systems, and (\emph{iii}) performing extensive numerical comparisons and sensitivity analysis with respect to network parameters and loading conditions.




%
\bibliographystyle{IEEEtran}
\bibliography{IEEEabrv,conf}

\end{document}